\newtheorem{theorem}{Theorem}[section]
\newtheorem{lemma} [theorem]{Lemma}
\newtheorem{proposition}[theorem]{Proposition}
\begin{document}
\label{'ubf'}  
\setcounter{page}{1}                                 
\markboth {\hspace*{-9mm} \centerline{\footnotesize \sc
   Algebraic Identities  }
                 }
                { \centerline                           {\footnotesize \sc  
                   %put here the author's name
      Tapas Chatterjee and Sonam Garg                                                } \hspace*{-9mm}              
               }
\vspace*{-2cm}

\begin{center}
{ {\Large \textbf { \sc  Algebraic identities among $q$- analogue of Euler double zeta values}}
\\
\medskip
\author{Chatterjee, Tapas \& Garg, Sonam} 
{\sc Tapas Chatterjee\footnote{Research of the first author is partly supported by the core research grant CRG/2023/000804 of the Science and Engineering Research Board of DST, Government of India.} and Sonam Garg\footnote{Research of the second author is supported by the University Grants Commission (UGC), India under File No.: 972/(CSIR-UGC NET JUNE 2018).} }\\
{\footnotesize Department of Mathematics, Indian Institute of Technology Ropar, Punjab, India.}\\
{\footnotesize e-mail: {\it {}$^1$tapasc@iitrpr.ac.in, {}$^2$2018maz0009@iitrpr.ac.in}}
}
\end{center}
\thispagestyle{empty}
\hrulefill
\begin{abstract}  
{\footnotesize  In 2003, Zudilin presented a $q$-analogue of Euler's identity for one of the variants of $q$-double zeta function. This article focuses on exploring identities related to another variant of $q$-double zeta function and its star variant. Using a $q$-analogue of the Nielsen Reflexion Formula for $q>1$, we investigate identities involving different versions of $q$-analogues of the Riemann zeta function and the double-zeta function.

Additionally, we analyze the behavior of $\zeta_q(s_1, s_2)$ as $s_1$ and $s_2$ approach to $0$ and compare these limits to those of the classical double-zeta function. Finally, we discuss the $q$-analogue of the Mordell-Tornheim $r$-ple zeta function and its relation with the $q$-double zeta function.  
}
 \end{abstract}
 \hrulefill

{\small \textbf{Keywords:} Double Euler Stieltjes constant, Euler double zeta values, Multiple zeta function, Mordell-Tornheim zeta function, Nielsen Reflexion Formula, $q$-series. }

\indent {\small {\bf 2020 Mathematics Subject Classification:} 11B65, 11M06, 11M32. }

\section{Introduction}
The multiple zeta function, denoted by $\zeta(s_1,\hdots, s_r)$, is a mathematical function that is a natural generalization of the Riemann zeta function. It is defined as
\begin{align*}
\zeta(s_1,\hdots, s_r)= \sum_{n_1>n_2 >\hdots> n_r >0} \frac{1}{n_1^{s_1}\cdots n_k^{s_r}} = \sum_{n_1>n_2 \hdots > n_r >0} \prod_{i=1}^{r}\frac{1}{n_i^{s_i}}
\end{align*} 
where $s_1 > 1$ and $s_k\geq 1$ for $2 \leq k \leq r$ and converge when $\Re(s_1)+ \cdots + \Re(s_i)>i$, for all $i$. The multiple zeta functions, similar to the Riemann zeta function, are meromorphic and can be continued analytically in $\mathbb{C}^r$. These sums are known as multiple zeta values (MZVs) or Euler sums when $s_1,\hdots,s_r$ are all positive integers (with $s_1 > 1$). Here, $r$ is called the depth and $s_1+\cdots+s_r$ is called the weight of an MZV. The multiple zeta function has another variation known as multiple zeta star function, denoted as $\zeta^*(s_1,\ldots,s_r)$,  which is given as follows
\begin{align*}
\zeta^*(s_1,\hdots, s_r)= \sum_{n_1\geq n_2 \geq \hdots\geq n_r \geq 1} \frac{1}{n_1^{s_1}\cdots n_k^{s_r}} = \sum_{n_1 \geq n_2 \geq \hdots \geq n_r \geq 1} \prod_{i=1}^{r}\frac{1}{n_i^{s_i}},
\end{align*} 
where these sums are known as multiple zeta star values of depth $r$ and weight $n$. There is another well-established generalization of a function known as its $q$-analogue. In the domain of $q$-series, our primary object is to discover mathematical counterparts that behave as the original object when we have $q \to 1$. The $q$-analogues of a function are a type of generalization that depends on a parameter $q$. It reduces to the known classical equation as $q \to 1$ for any real number between $0 < q < 1$, or $q > 1$. Specifically, the $q$-analogue of a complex number $a$ is expressed as follows:
\begin{align*}
[a]_q = \frac{q^a -1}{q-1}, ~~q \neq 1.
\end{align*} For $q >1$, Kurokawa and Wakayama in \cite{KW} and Chatterjee and Garg in \cite{TSG} studied the following $q$- analogue of the Riemann zeta function
\begin{align} \label{E15}
\zeta_q(s)=\sum_{n=1}^{\infty}\frac{q^n}{[n]_q^s} ~~~~ \text{for}~~~~ \Re(s)>1.
\end{align} 
It should be noted that there are different ways to define $q$- analogues of the multiple zeta functions. One commonly studied $q$- analogue is proposed by Bradley in \cite{DMB}, which can be expressed as follows
\begin{align} \label{E16}
\zeta_q^B(s_1, s_2, \hdots, s_m) = \displaystyle \sum_{k_1> \hdots> k_m >0} \prod_{j=1}^m \frac{q^{(s_j-1)k_j}}{[k_j]^{s_j}_q},
\end{align}
where $s_1 > 1$ and $s_j\geq 1$ for $2 \leq j \leq m$. An alternative form of a $q$- analogue of the multiple zeta function was investigated by Ohno, Okuda, and Zudilin \cite{OOZ}. This particular $q$- analogue is defined as follows
\begin{align}
\overline{\mathfrak{z}}_q(s_1, \hdots, s_m)= \displaystyle \sum_{k_1> \hdots> k_m >0} \frac{q^{k_1}}{(1-q^{k_1})^{s_1} \cdots (1-q^{k_m})^{s_m} }. \label{QMZ2}
\end{align}
In addition to this, many other mathematicians, including Ebrahimi-Fard, Manchon and Singer in \cite{FMS}, Bachmann in \cite{HB}, Singer in \cite{JS} studied various variants of $q$-analogue of multiple zeta function, in particular double zeta values. In a recent article, Chatterjee and Garg \cite{TSG2} have explored another variant of a $q$-analogue of the multiple zeta function for $q>1$, denoted as 
\begin{align}
\zeta_q(s_1, s_2, \hdots, s_m) = \sum_{k_1> \hdots> k_m >0} \prod_{j=1}^m \frac{q^{k_j}}{[k_j]^{s_j}_q}. \label{20}
\end{align}
In particular, the series defining a $q$- double zeta function is 
\begin{align}
\zeta_q(s_1,s_2)= \sum_{k_1 > k_2 \geq 1} \frac{q^{k_1}q^{k_2}}{[k_1]^{s_1}_q[k_2]^{s_2}_q} = \sum_{k_1,k_2>0}\frac{q^{{k_1}+{k_2}}q^{k_2}}{[k_1+k_2]_q^{s_1}[k_2]_q^{s_2}}, \label{QMZ}
\end{align} 
where $s_1, s_2$ are complex numbers with $\Re(s_1) >1$ and $\Re(s_2) \geq 1$. They discussed the binomial expansion of the aforementioned $q$- double zeta function which is given by the expression
\begin{align}
\zeta_q(s_1,s_2)&=(q-1)^{s_1+s_2}\Bigg[\frac{1}{q^{s_1-1}-1}\Bigg\{\frac{1}{q^{s_1+s_2-2}-1}+\frac{s_2}{q^{s_1+s_2-1}-1}+\frac{s_2(s_2+1)}{2(q^{s_1+s_2}-1)}+\cdots\Bigg\} \nonumber\\
&\qquad +s_1\frac{1}{q^{s_1}-1}\Bigg\{\frac{1}{q^{s_1+s_2-1}-1}+\frac{s_2}{q^{s_1+s_2}-1}+\frac{s_2(s_2+1)}{2(q^{s_1+s_2+1}-1)}+\cdots\Bigg\}\nonumber\\
&\qquad \qquad+\frac{s_1(s_1+1)}{2}\frac{1}{q^{s_1+1}-1}\Bigg\{\frac{1}{q^{s_1+s_2}-1}+\frac{s_2}{q^{s_1+s_2+1}-1} +\frac{s_2(s_2+1)}{2(q^{s_1+s_2+2}-1)}\nonumber\\
&\qquad \qquad \qquad +\cdots\Bigg\} + \cdots \Bigg] \label{E1}.
\end{align} 
They also investigated a closed-form expression for a $q$- analogue of Euler’s constant of height $2$, denoted by $\gamma_{0,0} (q)$, the constant term in the Laurent series expansion of the $q$- analogue of the double zeta function given by (\ref{QMZ}) around $s_1 = 1$ and $s_2 = 1$. This article focuses on the identities of a $q$- double zeta values of the particular $q$- analogue defined in (\ref{QMZ}). Moreover, we discuss the identities involving the $q$- analogue of double zeta star function. Similar to the classical case, $q$- analogue of multiple zeta star function is given as
\begin{align*}
\zeta_q^*(s_1, s_2, \hdots, s_m) = \displaystyle \sum_{k_1 \geq \hdots \geq k_m \geq 1} \prod_{j=1}^m \frac{q^{k_j}}{[k_j]^{s_j}_q}.
\end{align*}
In particular, the series defining a $q$- analogue of double zeta star function is given by
\begin{align}
\zeta_q^*(s_1, s_2) = \sum_{k_1 \geq k_2 \geq 1} \frac{q^{k_1}q^{k_2}}{[k_1]^{s_1}_q[k_2]^{s_2}_q}.  \label{E19}
\end{align}
The study of identities among multiple zeta values has been an active area of research for several decades. The identities describe how multiple zeta values of a given weight and depth can be expressed in terms of multiple zeta values of lower weight and lower depths. In literature, various identities related to multiple zeta values are studied by different mathematicians. In 1775, Euler \cite{LE} proved the following identity $$\zeta(n)= \displaystyle\sum_{j=1}^{n-2}\zeta(n-j,j)$$ which holds for any integer $n \geq3$. In particular, he proved that $$\zeta(2,1) = \zeta(3).$$ In 2000, Hoffman and Ohno presented an identity that holds for an admissible sequence of positive integers $\textbf{s}=(s_1,s_2, \hdots, s_l)$ (with $s_1 > 1$), which is expressed as follows $$\displaystyle\sum_{k=1}^l \zeta(s_k+1, s_{k+1}, \hdots, s_l,s_1, \hdots, s_{k-1})= \sum_{\substack{k=1\\s_k \geq 2}}^l \sum_{j=0}^{s_k-2} \zeta(s_k-j, s_{k+1}, \hdots, s_l,s_1, \hdots, s_{k-1}, j+1).$$ Further, Gangl, Kaneko and Zagier in \cite{GKZ} proved the following identities 
\begin{align*}
\sum_{m=1}^{n-1}\zeta(2m, 2n-2m) &= \frac{3}{4}\zeta(2n), ~~~~\text{for}~ n>1\\
\sum_{m=1}^{n-1}\zeta(2m+1, 2n-2m-1) &= \frac{1}{4}\zeta(2n), ~~~~\text{for}~ n>1.
\end{align*}
In addition to these, various other identities for different weights have been studied in the literature, including
\begin{align*}
\zeta(2)\zeta(2) &= 2 \zeta(2,2) + \zeta(4)\\
2\zeta(2,2,1) + \zeta(2,1,2) + \zeta(4,1)& = \zeta(3,2) + \zeta (5)\\
\zeta(5,1) + \zeta(4,2) &= \zeta(4,1,1) + \zeta(3,2,1) + \zeta(2,3,1)\\
\zeta^*(4,1,2) &= \zeta(4,1,2) + \zeta(5,2) + \zeta(4,3) + \zeta(7)\\
\zeta(2,5,3) &= \zeta^*(2,5,3) - \zeta^*(7,3) - \zeta^*(2,8) + \zeta^*(10).
\end{align*}
Furthermore, for integers $s, s^{\prime} \geq 2$ the Nielsen Reflexion Formula given by
\begin{align}
\zeta(s) \zeta(s^{\prime}) = \zeta(s,s^{\prime}) + \zeta(s^{\prime},s) + \zeta(s + s^{\prime}), \label{E11}
\end{align}
is another well-known identity studied in the literature. Also, $q$- analogue the Nielsen Reflexion Formula for the variant given in (\ref{E16}) is given as
\begin{align*}
\zeta[s] \zeta[s^{\prime}] = \zeta[s,s^{\prime}] + \zeta[s^{\prime},s] + \zeta[s + s^{\prime}] + (1-q)\zeta[s + s^{\prime}-1].
\end{align*}
In this article, we discuss the following $q$-analogue of the Nielsen Reflexion Formula for $q>1$ for the variant given by the expression (\ref{20}) and (\ref{E19}), respectively,
\begin{align}
\zeta_q(s) \zeta_q(s^{\prime})& = \zeta_q(s,s^{\prime}) + \zeta_q(s^{\prime},s) + \zeta_q(s + s^{\prime}) + (q-1)\zeta_q(s + s^{\prime}-1) \nonumber\\ 
\zeta_q(s) \zeta_q(s^{\prime})& = \zeta_q^*(s,s^{\prime}) + \zeta_q^*(s^{\prime},s) - \zeta_q(s + s^{\prime}) - (q-1)\zeta_q(s + s^{\prime}-1), \label{E12}
\end{align}
where $s, s^{\prime} \geq 2$ and then use it to explore identities involving different versions of a $q$-analogue of the Riemann zeta function and the double-zeta function. The proof of the expression (\ref{E12}) can be easily seen from the following:
\begin{align*}
\zeta_q(s) \zeta_q(s^{\prime})&= \sum_{n=1}^{\infty}\frac{q^n}{[n]_q^s} \sum_{m=1}^{\infty}\frac{q^m}{[m]_q^{s^{\prime}}}\\
&= \sum_{n > m \geq 1} \frac{q^{n}q^{m}}{[n]^{s}_q[m]^{s^{\prime}}_q} + \sum_{m > n \geq 1} \frac{q^{m}q^{n}}{[m]^{s^{\prime}}_q[n]^{s}_q} + \sum_{n \geq 1} \frac{q^{2n}}{[n]^{s + s^{\prime}}_q} \\
& = \sum_{n > m \geq 1} \frac{q^{n}q^{m}}{[n]^{s}_q[m]^{s^{\prime}}_q} + \sum_{m > n \geq 1} \frac{q^{m}q^{n}}{[m]^{s^{\prime}}_q[n]^{s}_q} + \sum_{n \geq 1} \frac{q^{n}}{[n]^{s + s^{\prime}}_q} + (q-1)\sum_{n \geq 1} \frac{q^{n}}{[n]^{s + s^{\prime} -1}_q}\\
&= \zeta_q(s,s^{\prime}) + \zeta_q(s^{\prime},s) + \zeta_q(s + s^{\prime}) + (q-1)\zeta_q(s + s^{\prime}-1).
\end{align*}
Similarly, we will get the proof for the other expression in (\ref{E12}) related to the star variant.\\
Moreover, the limiting values of a $q$-analogue of the double zeta function are investigated as $s_1\to 0$ and $s_2\to 0$, and compared with the limiting values of the classical double zeta function.\\
The objective of this study is to broaden our understanding of this crucial area of mathematics by providing new insights. To achieve this goal, we present the following theorems.
\noindent
\begin{theorem}\label{P1}
Let $n_1$, $n_2$ be two integers and consider the $q$- double zeta function $\zeta_q(s_1,s_2)$ defined in (\ref{QMZ}). We define the following limits
\begin{align*}
\zeta_q(n_1,n_2)=\lim_{s_1\rightarrow n_1} \lim_{s_2\rightarrow n_2} \zeta_q(s_1,s_2)
\end{align*}
and
\begin{align*}
\zeta_q^R(n_1,n_2)=\lim_{s_2\rightarrow n_2} \lim_{s_1\rightarrow n_1} \zeta_q(s_1,s_2),
\end{align*}
whenever they exist. Then, we have
\begin{align*}
\lim_{q \rightarrow 1}\zeta_q(0,0) = \frac{5}{12} = \zeta^R(0,0) ~~~\text{and}~~~ \lim_{q \rightarrow 1}\zeta_q^R(0,0) = \frac{1}{3} = \zeta(0,0),
\end{align*}
where $$\zeta(n_1,n_2)=\lim_{s_1\rightarrow n_1} \lim_{s_2\rightarrow n_2} \zeta(s_1,s_2)$$
and
$$\zeta^R(n_1,n_2)=\lim_{s_2\rightarrow n_2} \lim_{s_1\rightarrow n_1} \zeta(s_1,s_2),$$
and $\zeta(s_1,s_2)$ is the classical double zeta function.
\end{theorem}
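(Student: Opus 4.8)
The plan is to reduce each iterated limit to single–variable (\(q\)–)zeta values at \(s=0\) and \(s=-1\), and only then let \(q\to1^{+}\). Two elementary reductions do the work: a direct evaluation of the inner sum handles the limit \(s_2\to0\), while the reflection formula (\ref{E12}) handles the limit \(s_1\to0\); the classical constants come out of the same manipulations with \(q=1\) and (\ref{E11}).

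First I would compute the inner limit \(s_2\to0\). For \(\Re(s_1)>2\) the point \((s_1,0)\) lies in the region of absolute convergence of (\ref{QMZ}), so I may set \(s_2=0\) directly; the inner sum over \(k_2\) becomes the finite geometric sum \(\sum_{k_2=1}^{k_1-1}q^{k_2}=\frac{q^{k_1}-q}{q-1}\). Substituting the identity \(q^{k}=1+(q-1)[k]_q\) to linearize \(q^{2k_1}\) in \([k_1]_q\), I expect the telescoping
\[
\lim_{s_2\to0}\zeta_q(s_1,s_2)=\zeta_q(s_1-1)-\zeta_q(s_1),
\]
which persists as a meromorphic identity in \(s_1\) by continuation, with the classical analogue \(\zeta(s_1-1)-\zeta(s_1)\) at \(q=1\). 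Evaluating at \(s_1=0\) gives one of the two required constants, namely \(\zeta_q(-1)-\zeta_q(0)\).

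For the opposite inner limit \(s_1\to0\) a direct substitution is unavailable, since \(k_1\) is the summation bound and the series diverges at \(s_1=0\); instead I would feed the previous identity into (\ref{E12}). Letting \(s\to0\) in \(\zeta_q(s)\zeta_q(s')=\zeta_q(s,s')+\zeta_q(s',s)+\zeta_q(s+s')+(q-1)\zeta_q(s+s'-1)\) and using \(\lim_{s\to0}\zeta_q(s',s)=\zeta_q(s'-1)-\zeta_q(s')\) from the first step, the \(\zeta_q(s')\) terms cancel and I expect
\[
\lim_{s_1\to0}\zeta_q(s_1,s')=\zeta_q(0)\,\zeta_q(s')-q\,\zeta_q(s'-1),
\]
with the classical version \(\zeta(0)\zeta(s')-\zeta(s'-1)\) following identically from (\ref{E11}). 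Letting \(s'\to0\) yields the second constant, \(\zeta_q(0)^2-q\,\zeta_q(-1)\). At this stage all four quantities are expressed through \(\zeta_q(0)\) and \(\zeta_q(-1)\) (resp. \(\zeta(0)=-\tfrac12\) and \(\zeta(-1)=-\tfrac1{12}\)), and the classical iterated limits at the origin evaluate to \(\tfrac{5}{12}\) and \(\tfrac13\) according to the order of limits.

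Finally I would let \(q\to1^{+}\). Invoking the continuation of (\ref{E15}) from \cite{KW,TSG} together with \(\zeta_q(0)\to-\tfrac12\) and \(\zeta_q(-1)\to-\tfrac1{12}\), the two expressions \(\zeta_q(-1)-\zeta_q(0)\) and \(\zeta_q(0)^2-q\,\zeta_q(-1)\) tend to \(\tfrac{5}{12}\) and \(\tfrac13\), which gives the displayed equalities after matching them with the corresponding classical iterated limits. The main obstacle is twofold. First, one must justify that these inner limits are genuinely the limits of the analytically continued two–variable function at \((0,0)\): the polar divisor \(s_1+s_2=0\) passes through the origin, so the two orders of limits really do disagree, and the interchange of \(\lim\) with the continuation (equivalently, with the defining summation) has to be controlled, most cleanly through an Euler–Maclaurin expansion of the inner sum. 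Second, one must pin down the values of \(\zeta_q(0)\) and \(\zeta_q(-1)\) as \(q\to1^{+}\), since it is exactly the sensitivity of these special values—and of the order in which the two limits are taken—that decides which constant, \(\tfrac{5}{12}\) or \(\tfrac13\), is produced.
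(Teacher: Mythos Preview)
Your route is genuinely different from the paper's. The paper works directly from the binomial expansion (\ref{E1}): taking $s_1,s_2\to0$ in either order leaves only finitely many surviving terms (those where a vanishing factor $s_i$ in the numerator is compensated by a vanishing $q^{\alpha}-1$ in the denominator), producing explicit closed forms in $q$ which are then expanded as $q\to1$ via the asymptotics of $1/\log(1+x)$. Your approach instead reduces everything to the single-variable $q$-zeta: the identity $\lim_{s_2\to0}\zeta_q(s_1,s_2)=\zeta_q(s_1-1)-\zeta_q(s_1)$ from the geometric inner sum, combined with the reflection formula (\ref{E12}), gives $\zeta_q(0,0)=\zeta_q(-1)-\zeta_q(0)$ and $\zeta_q^R(0,0)=\zeta_q(0)^2-q\,\zeta_q(-1)$, and you then invoke $\zeta_q(0)\to-\tfrac12$, $\zeta_q(-1)\to-\tfrac1{12}$. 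Both methods yield the same $q$-limits $\tfrac{5}{12}$ and $\tfrac13$; yours is more structural and avoids the series bookkeeping of (\ref{E1}), at the cost of importing the limits of $\zeta_q$ at $0$ and $-1$ from \cite{KW,TSG} as a black box (you should check those references actually supply these specific values).

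There is, however, a real tension you have glossed over. Running your argument verbatim in the classical case (with (\ref{E11}) in place of (\ref{E12})) gives
\[
\zeta(0,0)=\lim_{s_1\to0}\lim_{s_2\to0}\zeta(s_1,s_2)=\zeta(-1)-\zeta(0)=\tfrac{5}{12},
\qquad
\zeta^R(0,0)=\zeta(0)^2-\zeta(-1)=\tfrac13,
\]
which is the \emph{opposite} labeling from the theorem's displayed equalities $\zeta(0,0)=\tfrac13$ and $\zeta^R(0,0)=\tfrac{5}{12}$. Your proposal hides this by saying only that the classical iterated limits ``evaluate to $\tfrac{5}{12}$ and $\tfrac13$ according to the order of limits'' and then ``matching'' without specifying which is which. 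You cannot leave this vague: either your method shows that the $q$-limits agree with the classical limits taken in the \emph{same} order (so the cross-matching asserted in the theorem is mis-stated), or there is a subtlety near the polar divisor $s_1+s_2=0$ that invalidates one of your continuations in the classical case but not in the $q$-case. The paper's own proof does not address the classical values at all---it simply asserts them---so your write-up must confront this discrepancy head-on rather than paper over it.
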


\noindent
In 2003, Zudilin \cite{WZ} presented a $q$-analogue of Euler's formula which is given as $$2\zeta_q(2,1) = \zeta_q(3),$$ where $$\zeta_q(2,1) = \displaystyle \sum_{n_1>n_2 \geq1} \frac{q^{n_1}}{(1-q^{n_1})^2(1-q^{n_2})}~~\text{and}~~\zeta_q(3) = \sum_{n=1}^{\infty}\frac{q^n(1+q^n)}{(1-q^n)^3}.$$
In this article, we investigate similar type of identities for the $q$- analogue given by (\ref{QMZ}) for higher weights. However, before that we introduce the following function for $q>1$ that is akin to the $q$- analogue defined by Ohno, Okuda, and Zudilin in expression (\ref{QMZ2})
\begin{align}
\zeta^{\circ}_q(s_1,s_2)= \sum_{n_1 > n_2 \geq 1}\frac{q^{n_1}}{[n_1]_q^{s_1}[n_2]_q^{s_2}} = \sum_{n_1,n_2>0}\frac{q^{{n_1}+{n_2}}}{[n_1+n_2]_q^{s_1}[n_2]_q^{s_2}}. \label{E23}
\end{align}
Now, consider the following binomial expansion of the expression (\ref{E23}):
\begin{align*}
\zeta^{\circ}_q(s_1,s_2)&=\sum_{n_1,n_2 \geq 1}\frac{q^{{n_1}+{n_2}}}{[n_1+n_2]_q^{s_1}[n_2]_q^{s_2}} =\sum_{n_1,n_2 \geq 1}\frac{q^{n_1+n_2}(q-1)^{s_1}}{(q^{n_1+n_2}-1)^{s_1}}\frac{(q-1)^{s_2}}{(q^{n_2}-1)^{s_2}}\\
&=(q-1)^{s_1+s_2}\sum_{n_1,n_2 \geq 1}q^{n_1+n_2}(q^{n_1+n_2}-1)^{-s_1}(q^{n_2}-1)^{-s_2}\\
&=(q-1)^{s_1+s_2}\sum_{n_2 \geq 1}(q^{n_2}-1)^{-s_2}\sum_{n_1 \geq 1}q^{n_1+n_2}(q^{n_1+n_2}-1)^{-s_1}\\
&=(q-1)^{s_1+s_2}\sum_{n_2 \geq 1}q^{-n_2s_2}(1-q^{-n_2})^{-s_2}\sum_{n_1 \geq 1}q^{n_1+n_2(1-s_1)}(1-q^{-(n_1+n_2)})^{-s_1}\\
&=(q-1)^{s_1+s_2}\sum_{n_2 \geq 1}q^{-n_2s_2}\sum_{k_2=0}^{\infty}\binom{-s_2}{k_2}(-1)^{k_2}q^{-n_2k_2}\\
&~~~~\sum_{n_1 \geq 1}q^{n_1+n_2(1-s_1)}\sum_{k_1=0}^{\infty}\binom{-s_1}{k_1}(-1)^{k_1}q^{-(n_1+n_2)k_1}\\
&=(q-1)^{s_1+s_2}\sum_{k_2=0}^{\infty}\binom{-s_2}{k_2}(-1)^{k_2}\sum_{n_2 \geq 1}q^{n_2(-s_2-k_2)}\\
&~~~~\sum_{k_1=0}^{\infty}\binom{-s_1}{k_1}(-1)^{k_1}\sum_{n_1 \geq 1}q^{n_1+n_2(1-s_1-k_1)}\\
&=(q-1)^{s_1+s_2}\sum_{k_2=0}^{\infty}\frac{s_2(s_2+1)\cdots(s_2+k_2-1)}{k_2!}\sum_{k_1=0}^{\infty}\frac{s_1(s_1+1)\cdots(s_1+k_1-1)}{k_1!}\\
&~~~~\sum_{n_2 \geq 1}q^{n_2(-s_2-k_2)}\sum_{n_1 \geq 1}q^{n_1+n_2(1-s_1-k_1)}\\
&=(q-1)^{s_1+s_2}\sum_{k_2=0}^{\infty}\frac{s_2(s_2+1)\cdots(s_2+k_2-1)}{k_2!}\sum_{k_1=0}^{\infty}\frac{s_1(s_1+1)\cdots(s_1+k_1-1)}{k_1!}\\
&~~~~\sum_{n_2 \geq 1}q^{-n_2(s_2+k_2+s_1+k_1-1)}\sum_{n_1 \geq 1}q^{-n_1(s_1+k_1-1)}\\
&=(q-1)^{s_1+s_2}\sum_{k_2=0}^{\infty}\frac{s_2(s_2+1)\cdots(s_2+k_2-1)}{k_2!}\sum_{k_1=0}^{\infty}\frac{s_1(s_1+1)\cdots(s_1+k_1-1)}{k_1!}\\
&~~~~\Bigg(\frac{1}{q^{(s_2+k_2+s_1+k_1-1)}-1}\Bigg)\Bigg(\frac{1}{q^{(s_1+k_1-1)}-1}\Bigg).
\end{align*}
Clearly, $\zeta^{\circ}_q(s_1,s_2)$ is variant of a $q$-analogue of double zeta function which is meromorphic for $s_1$, $s_2 \in \mathbb{C}$ with simple pole for $s_1 \in \big\{1 +i\frac{2\pi b}{\log q} \mid b \in \mathbb{Z}\big\}~ \cup ~\big\{ a + i\frac{2\pi  b}{\log q} \mid a,b \in \mathbb{Z}, a \leq 0, b \neq 0\big\}$ or $s_1 + s_2 \in \big \{ a + i \frac{2\pi b}{\log q}\mid a,b \in \mathbb{Z}, a \leq 0, b\neq 0 \big\} ~ \cup ~ \big\{1 +i\frac{2\pi b}{\log q} \mid b \in \mathbb{Z}\big\}$. Also, its star variant is given as
\begin{align}
\zeta^{\circ *}_q(s_1,s_2)=\sum_{n_1 \geq n_2 \geq 1}\frac{q^{n_1}}{[n_1]_q^{s_1}[n_2]_q^{s_2}}. \label{E24}
\end{align}
With these functions, we can now state our theorems and propositions as follows.
\begin{theorem}\label{P2}
The following identities hold
\begin{align*}
\zeta^{\circ}_q(3,1)& = \zeta_q(4) - \zeta_q(2,2) +(q-1)\zeta_q(3) = (\zeta_q(2))^2 - 3 \zeta_q(2,2).\\
\zeta^{\circ}_q(4,1)& = \zeta_q(5) - \zeta_q(2,3) - \zeta_q(3,2)+ (q-1)\zeta_q(4)\\
& \qquad= \zeta_q(2) \zeta_q(3) - 2 \zeta_q(2,3) - 2 \zeta_q(3,2).\\
\zeta_q^{\circ}(5,1)&= \zeta_q(6) - \zeta_q(3,3) -\zeta_q(4,2) - \zeta_q(2,4) + (q-1)\zeta_q(5)\\
&\qquad = (\zeta_q(3))^2 - 3 \zeta_q(3,3) -\zeta_q(4,2) - \zeta_q(2,4)\\
& \qquad \qquad = \zeta_q(2) \zeta_q(4) - \zeta_q(3,3) -2 \zeta_q(4,2) - 2 \zeta_q(2,4).\\
(\zeta_q(3))^2 - 2 \zeta_q(3,3)& = \zeta_q(2) \zeta_q(4) - \zeta_q(2,4) - \zeta_q(4,2).
\end{align*}
\end{theorem}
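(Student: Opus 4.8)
The plan is to isolate two structural lemmas and then let the $q$-Nielsen reflexion formula (\ref{E12}) do the rest. First I would record an exact \emph{shift relation} linking the two families. Since $[n]_q=(q^n-1)/(q-1)$ gives $q^{n}=(q-1)[n]_q+1$, substituting this for the inner factor $q^{n_2}$ in the definition (\ref{QMZ}) of $\zeta_q(s_1,s_2)$ splits the sum and yields
\[
\zeta_q(s_1,s_2)=\zeta^{\circ}_q(s_1,s_2)+(q-1)\,\zeta^{\circ}_q(s_1,s_2-1),
\]
valid for $\Re(s_1)>1$ and $\Re(s_2)\ge 2$. This is the device that converts $\circ$-values into ordinary $q$-double zeta values.

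Next I would prove a $q$-analogue of Euler's depth-two sum formula for the circle variant: for every integer $w\ge 3$,
\[
\sum_{a=2}^{w-1}\zeta^{\circ}_q(a,w-a)=\zeta_q(w).
\]
Interchanging the finite sum over $a$ with the double series and using the elementary partial-fraction identity
\[
\sum_{a=1}^{w-1}\frac{1}{[m]_q^{a}[n]_q^{\,w-a}}=\frac{1}{[m]_q-[n]_q}\Big(\frac{1}{[n]_q^{\,w-1}}-\frac{1}{[m]_q^{\,w-1}}\Big),
\]
together with the simplification $q^{m}/([m]_q-[n]_q)=q^{m-n}/[m-n]_q$, I would split off the $a=1$ term and reindex by $m=n+k$. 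The diagonal $m=n$ then produces exactly $\zeta_q(w)$, while the two off-diagonal families cancel. The genuine difficulty is one of rigor rather than algebra: the intermediate pieces appearing after the split (a product involving the divergent $\zeta_q(1)=\sum_k q^k/[k]_q$, and the divergent $\zeta^{\circ}_q(1,w-1)$) are individually divergent, and only their combination converges. I expect this to be the main obstacle, and would handle it by pairing those two terms before summation (their leading large-$m$ behaviours are equal and opposite), or equivalently by inserting a regulator $q^{-\varepsilon m}$ and letting $\varepsilon\to 0^{+}$, so that every rearrangement takes place inside an absolutely convergent series.

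With these two lemmas the theorem assembles quickly. Taking the sum formula at weight $w$, isolating its boundary term $a=w-1$, rewriting each interior term ($2\le a\le w-2$) through the shift relation as $\zeta^{\circ}_q(a,w-a)=\zeta_q(a,w-a)-(q-1)\zeta^{\circ}_q(a,w-1-a)$, and recognising $\sum_{a=2}^{w-2}\zeta^{\circ}_q(a,w-1-a)$ as the sum formula at weight $w-1$ (hence equal to $\zeta_q(w-1)$), I obtain the master identity
\[
\zeta^{\circ}_q(w-1,1)=\zeta_q(w)+(q-1)\zeta_q(w-1)-\!\!\sum_{\substack{a+b=w\\ a,b\ge 2}}\!\!\zeta_q(a,b).
\]
Specialising $w=4,5,6$ gives the first displayed equality for $\zeta^{\circ}_q(3,1)$, $\zeta^{\circ}_q(4,1)$ and $\zeta^{\circ}_q(5,1)$ respectively.

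The remaining equalities are pure consequences of (\ref{E12}). With $(s,s')=(2,2)$ one has $\zeta_q(4)+(q-1)\zeta_q(3)=(\zeta_q(2))^2-2\zeta_q(2,2)$, turning the $w=4$ case into $(\zeta_q(2))^2-3\zeta_q(2,2)$; the choice $(s,s')=(2,3)$ settles $w=5$; and $(s,s')=(3,3)$ and $(s,s')=(2,4)$ each rewrite $\zeta_q(6)+(q-1)\zeta_q(5)$, producing the two further forms for $\zeta^{\circ}_q(5,1)$. Finally, the standalone identity is immediate from the same two reflexion instances: both $(\zeta_q(3))^2-2\zeta_q(3,3)$ and $\zeta_q(2)\zeta_q(4)-\zeta_q(2,4)-\zeta_q(4,2)$ equal $\zeta_q(6)+(q-1)\zeta_q(5)$, hence equal each other.
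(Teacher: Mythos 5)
Your shift relation $\zeta_q(s_1,s_2)=\zeta^{\circ}_q(s_1,s_2)+(q-1)\zeta^{\circ}_q(s_1,s_2-1)$ is correct, and the purely reflexion-based part of your argument is sound: the fourth displayed identity of the theorem really does follow from (\ref{E12}) alone, since both sides equal $\zeta_q(6)+(q-1)\zeta_q(5)$. The genuine gap is your second lemma. For this variant with $q>1$, the sum formula $\sum_{a=2}^{w-1}\zeta^{\circ}_q(a,w-a)=\zeta_q(w)$ is false already at $w=3$: at $q=2$ (where $[m]_2=2^m-1$) one computes $\zeta^{\circ}_2(2,1)=\sum_{m>n\geq 1}2^{m}/\bigl([m]_2^{2}[n]_2\bigr)\approx 0.8691$, whereas $\zeta_2(3)\approx 2.1776$. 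The failure occurs exactly at the point you flagged and then assumed away: for $q>1$ the quantity $q^{k}/[k]_q=(q-1)q^{k}/(q^{k}-1)$ tends to $q-1\neq 0$ as $k\to\infty$, so when you pair the two divergent families, the shifted tails do not cancel to zero; each such difference leaves a residue proportional to $(q-1)$ times the shift, and resumming these residues produces extra terms, including one that is not a $q$-zeta value at all. Carrying out your regularization honestly at weight $3$ gives
\[
\zeta^{\circ}_q(2,1)=\zeta_q(3)+(q-1)\zeta_q(2)-(q-1)\sum_{n\geq 1}\frac{nq^{n}}{[n]_q^{2}},
\]
which checks numerically at $q=2$ ($2.1776+2.7440-4.0526\approx 0.8691$). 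Since your master identity is obtained by feeding the sum formula at weights $w$ and $w-1$ through the (correct) shift relation, it inherits the error, and all three of your evaluations of $\zeta^{\circ}_q(w-1,1)$ are off by the corresponding term $(q-1)\sum_{n}nq^{n}/[n]_q^{w-1}$.

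You should know, however, that your conclusion agrees with the paper because the paper's own proof commits the same oversight. The proof of Theorem \ref{P2} telescopes via $\sum_{m=1}^{\infty}\bigl(\frac{q^m}{1-q^m}-\frac{q^{m+n}}{1-q^{m+n}}\bigr)=\sum_{m=1}^{n}\frac{q^m}{1-q^m}$, which is valid for $0<q<1$ but not for $q>1$: there the summand $\frac{q^m}{1-q^m}$ tends to $-1$, so the tail of length $n$ contributes an extra $+n$, and the corrected first identity reads
\[
\zeta^{\circ}_q(3,1)=\zeta_q(4)-\zeta_q(2,2)+(q-1)\zeta_q(3)-(q-1)\sum_{n\geq 1}\frac{nq^{n}}{[n]_q^{3}},
\]
again in agreement with numerics at $q=2$: the direct sum gives $\zeta^{\circ}_2(3,1)\approx 0.1884$, the theorem's stated right-hand side is $\approx 2.5812$, and the discrepancy is exactly $(q-1)\sum_{n}nq^{n}/[n]_q^{3}\approx 2.3927$. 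So your route --- shift relation plus a depth-two sum formula, assembled by induction on the weight --- is a genuinely different architecture from the paper's partial-fraction argument via (\ref{E6}), and it could in principle be repaired by proving the corrected sum formula with its $(q-1)$-terms; but as written, neither your lemma, nor your master identity, nor the first three displayed evaluations in the theorem hold for $q>1$.
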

\noindent
\begin{theorem} \label{P3}
For $s \geq 3$, \textup{Theorem \ref{P2}} can be generalized as follows
\begin{align*}
\zeta_q^{\circ}(s,1) = \zeta_q(s+1)- \sum_{i=2}^{s-1}\zeta_q(s+1-i,i) + (q-1)\zeta_q(s).
\end{align*}
Further, depending on the parity of $s$, we have\\
\textbf{Case 1:} If $s$ is odd.
\begin{align}
\zeta_q^{\circ}(s,1) = \zeta_q(r) \zeta_q(r^{\prime}) -2 \zeta_q (r, r^{\prime})- 2 \zeta_q (r^{\prime}, r) - \sum_{i=2}^{s-1}\zeta_q(s+1-i,i), \label{E7}
\end{align}
where $r \geq 2$, $r^{\prime} \geq 2$ and $r + r^{\prime} = s+1$. So, there are $\Big(\frac{s-1}{2}\Big)$ possibilities to write $\zeta_q^{\circ}(s,1)$ in (\ref{E7}) when $s$ is odd.\\
\textbf{Case 2:} If $s$ is even. 
\begin{align}
\zeta_q^{\circ}(s,1) = \zeta_q(t) \zeta_q(t^{\prime}) -2 \zeta_q (t, t^{\prime})- 2 \zeta_q (t^{\prime}, t) - \sum_{\substack{i=2\\i \neq t, t^{\prime}}}^{s-1}\zeta_q(s+1-i,i), \label{E8}
\end{align}
where $t \geq 2$, $t^{\prime} \geq 2$ and $t + t^{\prime} = s+1$. So, there are $\Big(\frac{s-2}{2}\Big)$ possibilities to write $\zeta_q^{\circ}(s,1)$ in (\ref{E8}) when $s$ is even. 
\end{theorem}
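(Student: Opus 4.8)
The plan is to prove the general identity first and then obtain Cases 1 and 2 from it together with the Nielsen Reflexion Formula (\ref{E12}). For the general identity I would begin by simplifying the two ``non-double'' terms on the right. Since $1+(q-1)[n]_q=q^n$, one has
$$\zeta_q(s+1)+(q-1)\zeta_q(s)=\sum_{n\geq 1}\frac{q^n\bigl(1+(q-1)[n]_q\bigr)}{[n]_q^{s+1}}=\sum_{n\geq 1}\frac{q^{2n}}{[n]_q^{s+1}},$$
so the task reduces to establishing
$$\zeta_q^{\circ}(s,1)+\sum_{i=2}^{s-1}\zeta_q(s+1-i,i)=\sum_{n\geq 1}\frac{q^{2n}}{[n]_q^{s+1}}.$$

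For the left-hand side I would evaluate the inner finite sum as a geometric series in the ratio $[n_1]_q/[n_2]_q$, namely
$$\sum_{i=2}^{s-1}\frac{1}{[n_1]_q^{s+1-i}[n_2]_q^{i}}=\frac{1}{[n_1]_q-[n_2]_q}\left(\frac{1}{[n_1]_q[n_2]_q^{s-1}}-\frac{1}{[n_1]_q^{s-1}[n_2]_q}\right),$$
and then use the $q$-addition identity $[n_1]_q=[n_2]_q+q^{n_2}[n_1-n_2]_q$ to rewrite the denominator $[n_1]_q-[n_2]_q$ as $q^{n_2}[n_1-n_2]_q$. Adding the $\zeta_q^{\circ}(s,1)$ summand $q^{n_1}/([n_1]_q^s[n_2]_q)$ and reindexing by $m=n_1-n_2$, I expect the genuinely off-diagonal contributions to telescope, collapsing the double sum onto the diagonal $\sum_n q^{2n}/[n]_q^{s+1}$. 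Carrying out this cancellation cleanly is the technical heart of the argument, and the step I expect to be the main obstacle.

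Given the general identity, Cases 1 and 2 follow by eliminating $\zeta_q(s+1)+(q-1)\zeta_q(s)$ with the first line of (\ref{E12}). For any admissible splitting $r+r'=s+1$ with $r,r'\geq 2$, the Reflexion Formula gives $\zeta_q(s+1)+(q-1)\zeta_q(s)=\zeta_q(r)\zeta_q(r')-\zeta_q(r,r')-\zeta_q(r',r)$, and substituting this into the general identity yields
$$\zeta_q^{\circ}(s,1)=\zeta_q(r)\zeta_q(r')-\zeta_q(r,r')-\zeta_q(r',r)-\sum_{i=2}^{s-1}\zeta_q(s+1-i,i).$$
The remaining work is purely combinatorial bookkeeping: the terms $\zeta_q(r,r')$ and $\zeta_q(r',r)$ already occur inside $\sum_{i=2}^{s-1}\zeta_q(s+1-i,i)$, at the indices $i=r'$ and $i=r$ respectively, so they may be pulled out of the sum to produce the stated coefficients, and one records which indices have been removed. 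Here the parity of $s$ enters. When $s$ is even, $s+1$ is odd and every splitting has $r\neq r'$, giving $\tfrac{s-2}{2}$ distinct products $\zeta_q(r)\zeta_q(r')$ and the excluded-index sum of (\ref{E8}); when $s$ is odd, $s+1$ is even and the symmetric splitting $r=r'=(s+1)/2$ is also allowed, where $\zeta_q(r,r')$ and $\zeta_q(r',r)$ coincide and contribute only a single term to the finite sum, so one must be careful not to double-count the middle diagonal term. Tracking this symmetric case correctly, and confirming the count $\tfrac{s-1}{2}$, is the delicate point in deducing (\ref{E7}).
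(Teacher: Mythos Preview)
Your overall plan is correct and matches the paper's structure: reduce the main identity to
\[
\zeta_q^{\circ}(s,1)+\sum_{i=2}^{s-1}\zeta_q(s+1-i,i)=\sum_{n\geq 1}\frac{q^{2n}}{[n]_q^{s+1}},
\]
and then feed the Nielsen Reflexion Formula (\ref{E12}) into the right-hand side to obtain Cases~1 and~2. Your treatment of the case split and the counting of splittings is essentially the same as the paper's (indeed you are somewhat more explicit about the diagonal $r=r'$ case).

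Where your route differs is in the ``technical heart.'' You propose to sum the finite geometric series in $i$ directly, rewrite $[n_1]_q-[n_2]_q=q^{n_2}[n_1-n_2]_q$, and then look for a telescoping in the variables $(n_2,m)$ with $m=n_1-n_2$. The paper instead packages this same geometric-series computation as the partial-fraction identity~(\ref{E6}), multiplies by $uv$, substitutes $u=q^m$, $v=q^n$, and sums. The crucial extra ingredient --- which your sketch does not yet contain --- is a \emph{symmetry swap} $m\leftrightarrow n$ on the left-hand side. This converts $\sum_{m,n}q^{m+n}/[(1-q^m)(1-q^{m+n})^s]$ into the same sum with $q^n$ in the denominator, so it can be combined with the $i=0$ piece of the partial fraction to give a numerator $q^{m+n}+q^{2n+m}$; after the change of variable $t=m+n$ this is exactly $\zeta_q^{\circ}(s,1)+\zeta_q(s,1)$. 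On the other side, the $i=s-1$ piece combines with the product term so that the inner $m$-sum becomes
\[
\sum_{m\geq 1}\Bigl(\frac{q^m}{1-q^m}-\frac{q^{m+n}}{1-q^{m+n}}\Bigr)=\sum_{m=1}^{n}\frac{q^m}{1-q^m},
\]
which is the genuine telescoping; this produces $\sum_n q^{2n}/(1-q^n)^{s+1}$ plus a copy of $\zeta_q(s,1)$ that cancels against the left. Your ``off-diagonal contributions telescope'' is precisely this phenomenon, but to see it cleanly you will need the symmetry step --- otherwise the three-factor terms $[m]_q^{-1}[n]_q^{1-s}[m+n]_q^{-1}$ etc.\ do not collapse in the variables you have chosen. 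So your approach is sound, but the missing move is the $m\leftrightarrow n$ symmetry that lets one pair the boundary terms of the partial fraction with the left-hand side and with the product term respectively.
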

\begin{proposition} \label{P5}
For $s \geq 3$, we have the following identities
\begin{align*}
\zeta_q^{\circ*}(s,1) = s\zeta_q(s+1)- \sum_{i=2}^{s-1}\zeta_q^*(s+1-i,i) + (s-1)(q-1)\zeta_q(s).
\end{align*}
Further, depending on the parity of $s$, we have\\
\textbf{Case 1:} If $s$ is odd.
\begin{align}
\zeta_q^{\circ *}(s,1)& = \zeta_q(r) \zeta_q(r^{\prime}) -2 \zeta_q^* (r, r^{\prime})- 2 \zeta_q^* (r^{\prime}, r) + (s+1)\zeta_q(s+1) + (q-1)s\zeta_q(s) \nonumber \\
& \qquad - \sum_{i=2}^{s-1}\zeta_q^*(s+1-i,i), \label{E21}
\end{align}
where $r \geq 2$, $r^{\prime} \geq 2$ and $r + r^{\prime} = s+1$. So, there are $\Big(\frac{s-1}{2}\Big)$ possibilities to write $\zeta_q^{\circ *}(s,1)$ in (\ref{E21}) when $s$ is odd.\\
\textbf{Case 2:} If $s$ is even. 
\begin{align}
\zeta_q^{\circ *}(s,1)& = \zeta_q(t) \zeta_q(t^{\prime}) -2 \zeta_q^* (t, t^{\prime})- 2 \zeta_q^* (t^{\prime}, t) + (s+1)\zeta_q(s+1) + (q-1)s\zeta_q(s) \nonumber\\
& \qquad - \sum_{\substack{i=2\\i \neq t, t^{\prime}}}^{s-1}\zeta_q(s+1-i,i), \label{E22}
\end{align}
where $t \geq 2$, $t^{\prime} \geq 2$ and $t + t^{\prime} = s+1$. So, there are $\Big(\frac{s-2}{2}\Big)$ possibilities to write $\zeta_q^{\circ *}(s,1)$ in (\ref{E22}) when $s$ is even. 
\end{proposition}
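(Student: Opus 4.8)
The plan is to express the star–circle object $\zeta_q^{\circ *}(s,1)$ in terms of the circle object $\zeta_q^{\circ}(s,1)$ already handled in Theorem \ref{P3}, and then to convert the resulting non-star double zeta values into star ones while introducing products of single zetas through the Nielsen Reflexion Formula (\ref{E12}). Two elementary diagonal splittings do all the structural work, and the remainder is coefficient bookkeeping.

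First I would split the defining sum (\ref{E24}) along its diagonal $n_1=n_2$. Since the summand carries only the factor $q^{n_1}$ in the numerator, isolating $n_1=n_2$ gives at once
\begin{align*}
\zeta_q^{\circ *}(s,1)=\zeta_q^{\circ}(s,1)+\sum_{n\ge 1}\frac{q^{n}}{[n]_q^{s+1}}=\zeta_q^{\circ}(s,1)+\zeta_q(s+1).
\end{align*}
The same splitting applied to the star double zeta (\ref{E19}), together with the elementary identity $q^{2k}=q^{k}+(q-1)q^{k}[k]_q$, produces the conversion rule
\begin{align*}
\zeta_q^*(a,b)=\zeta_q(a,b)+\zeta_q(a+b)+(q-1)\zeta_q(a+b-1),
\end{align*}
valid for any admissible pair $(a,b)$. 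Substituting the general identity of Theorem \ref{P3} for $\zeta_q^{\circ}(s,1)$ and rewriting each $\zeta_q(s+1-i,i)$ (every such term has weight $s+1$) through this rule turns the non-star sum into a star sum. Each of the $s-2$ conversions contributes one copy of $\zeta_q(s+1)+(q-1)\zeta_q(s)$, and adding these to the extra $\zeta_q(s+1)$ coming from the diagonal gives the coefficients $s$ and $s-1$, which is exactly the first asserted identity.

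For the two parity cases the mechanism is to feed in the star form of the Nielsen Reflexion Formula, namely the second line of (\ref{E12}) specialised to weight $s+1$, rewritten as the vanishing relation
\begin{align*}
\zeta_q(r)\zeta_q(r')-\zeta_q^*(r,r')-\zeta_q^*(r',r)+\zeta_q(s+1)+(q-1)\zeta_q(s)=0,\qquad r+r'=s+1.
\end{align*}
Adding this identically–zero combination to the general identity, for any admissible pair $(r,r')$ with $r,r'\ge 2$, introduces the product $\zeta_q(r)\zeta_q(r')$ together with the symmetric pair of star double zetas, and simultaneously raises the weight-$(s+1)$ coefficients from $(s,s-1)$ to $(s+1,s)$; this is the content of (\ref{E21}) and (\ref{E22}). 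The parity of $s$ enters only through the enumeration of admissible pairs: for odd $s$ the weight $s+1$ is even, the central pair $r=r'=(s+1)/2$ is allowed, and the unordered pairs $\{r,r'\}$ with $r\le r'$ number $(s-1)/2$; for even $s$ the weight $s+1$ is odd, no central coincidence occurs, and the count drops to $(s-2)/2$.

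The principal obstacle is the constant-weight bookkeeping together with its one genuinely parity-sensitive point. Every star/non-star conversion and every application of the vanishing Nielsen relation injects a copy of $\zeta_q(s+1)+(q-1)\zeta_q(s)$, and these contributions must be tallied exactly across the $s-2$ summands, the diagonal term, and the chosen pair in order to land on the stated coefficients. The subtle situation is odd $s$ with $r=r'$: here the two star double zetas $\zeta_q^*(r,r')$ and $\zeta_q^*(r',r)$ coincide, so the symmetric combination collapses to $2\zeta_q^*(r,r)$ and the Nielsen relation reads $\zeta_q(r)^2=2\zeta_q^*(r,r)-\zeta_q(s+1)-(q-1)\zeta_q(s)$; keeping track of this factor of two, and of whether the chosen pair contributes one index or two to the range $2\le i\le s-1$, is the only place where real care is required.
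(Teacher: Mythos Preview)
Your proposal is correct and follows essentially the same approach as the paper's own proof. The paper records exactly the two diagonal splittings you use, namely $\zeta_q^{\circ *}(s',s)=\zeta_q^{\circ}(s',s)+\zeta_q(s'+s)$ and $\zeta_q^{*}(s',s)=\zeta_q(s',s)+\zeta_q(s'+s)+(q-1)\zeta_q(s'+s-1)$, and then simply says ``using the above two observations and Theorem~\ref{P3}, we get the desired results''; you have written out the bookkeeping that the paper leaves implicit, and your device of adding the star Nielsen relation as a vanishing combination is just an explicit way of packaging the parity-case substitution already built into Theorem~\ref{P3}.
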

\noindent
Further, Tornheim in \cite{LT} considered the following double series
\begin{align} \label{E13}
\sum_{m \geq 1} \sum_{n \geq 1} \frac{1}{m^{s_1} n^{s_2} (m+n)^{s_3}},
\end{align} 
where $s_1$, $s_2$ and $s_3$ are nonnegative integers satisfying $s_1 + s_3 >1$, $s_2 + s_3 >1$ and $s_1 + s_2 + s_3 >2$, which he referred to as the harmonic double series. The special case of (\ref{E13}) was studied by Mordell in \cite{LJM}, where he considered $s_1 = s_2 = s_3$ and also investigated the following multiple sum
\begin{align*}
\sum_{m_1 \geq 1} \cdots  \sum_{m_r \geq 1} \frac{1}{m_1 \cdots m_r (m_1 + \cdots + m_r + a)},
\end{align*}
where $a > -r$. In \cite{KM, KM1}, Matsumoto referred to the series given in (\ref{E13}) as the Mordell-Tornheim zeta function and introduced the following multi-variable version of (\ref{E13})
\begin{align} \label{E14}
\zeta_{MT}(s_1,s_2, \ldots, s_r; s_{r+1}) = \sum_{m_1 \geq 1} \cdots  \sum_{m_r \geq 1} \frac{1}{m_1^{s_1} \cdots m_r^{s_r}(m_1 + \cdots + m_r)^{s_{r+1}}},
\end{align}
where $s_1, \ldots, s_{r+1} \in \mathbb{C}$ and the series converges absolutely when $\Re(s_j) > 1 ~(1 \leq j \leq r)$ and $\Re(s_{r+1}) >0$. Matsumoto referred to it as  Mordell-Tornheim $r$-ple zeta function and also proved the meromorphic continuation of the series to the whole $\mathbb{C}^{r+1}$ space, exhibiting singularities solely on subsets of $\mathbb{C}^{r+1}$ defined by one of the following equations
\begin{align*}
&s_j + s_{r+1} = 1-l~~(1 \leq j \leq r, l \in \mathbb{N}_0),\\
&s_{j_1}+ s_{j_2} + s_{r+1} = 2-l~~(1 \leq s_{j_1} < s_{j_2} \leq r, l \in \mathbb{N}_0),\\
&\qquad\qquad\cdots\cdots\\
&s_{j_1}+ \cdots s_{j_{r-1}} + s_{r+1} = r-1-l~~(1 \leq s_{j_1} < \cdots < s_{j_{r-1}} \leq r, l \in \mathbb{N}_0),\\
&s_1 + \cdots + s_r + s_{r+1} = r,
\end{align*}
where $\mathbb{N}_0$ denotes the set of non-negative integers. In this article, we introduce the following $q$-analogue of the expression (\ref{E14})
\begin{align*}
\zeta_{MT,q}(s_1,s_2, \ldots, s_r; s_{r+1}) = \sum_{m_1 \geq 1} \cdots  \sum_{m_r \geq 1} \frac{q^{m_1} \cdots q^{m_r} q^{m_1 + \cdots + m_r}}{[m_1]_q^{s_1} \cdots [m_r]_q^{s_r}[m_1 + \cdots + m_r]_q^{s_{r+1}}},
\end{align*}
where $s_1, \ldots, s_{r+1} \in \mathbb{C}$ and $q>1$. We call it $q$- Mordell-Tornheim $r$-ple zeta function. For $r=2$, we have 
\begin{align*}
\zeta_{MT,q}(s_1,s_2; s_3) = \sum_{m_1 \geq 1}\sum_{m_2 \geq 1} \frac{q^{m_1} q^{m_2} q^{m_1 + m_2}}{[m_1]_q^{s_1} [m_2]_q^{s_2}[m_1 + m_2]_q^{s_3}}.
\end{align*}

\begin{theorem}\label{P4}
Let $s \geq 2$ and $r \geq 3$ be any two integers, then we have the following identity
\begin{align*}
\zeta_q(s,r) = \zeta_q(s)(\zeta_q(r) + (q-1)\zeta_q(r-1)) - \sum_{j=0}^{s-1}\zeta_{MT,q}(r, j+1; s-j),
\end{align*} 
where $\zeta_q(s)$ is the $q$-analogue of the Riemann zeta function given by the expression (\ref{E15}).
\end{theorem}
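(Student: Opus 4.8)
The plan is to evaluate the finite combination of Mordell--Tornheim sums on the right-hand side in closed form, and then to solve the resulting relation for the double zeta value $\zeta_q(s,r)$. Concretely, I would start from
\[
\sum_{j=0}^{s-1}\zeta_{MT,q}(r,j+1;s-j)=\sum_{m_1,m_2\geq 1}\frac{q^{m_1}q^{m_2}q^{m_1+m_2}}{[m_1]_q^{r}}\sum_{j=0}^{s-1}\frac{1}{[m_2]_q^{j+1}[m_1+m_2]_q^{s-j}},
\]
where interchanging the finite $j$-sum with the double series is legitimate because, for $q>1$, every series in sight converges absolutely and geometrically fast (since $[n]_q\sim q^{n}/(q-1)$).

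The two ingredients I would isolate as lemmas are the following. First, a purely algebraic telescoping (finite geometric) identity: for $x\neq y$,
\[
\sum_{j=0}^{s-1}\frac{1}{x^{\,j+1}y^{\,s-j}}=\frac{1}{y-x}\left(\frac{1}{x^{s}}-\frac{1}{y^{s}}\right).
\]
I would apply this with $x=[m_2]_q$ and $y=[m_1+m_2]_q$, using the key $q$-identity $[m_1+m_2]_q-[m_2]_q=q^{m_2}[m_1]_q$, so that the factor $q^{m_2}$ produced in the denominator cancels the $q^{m_2}$ already present in the numerator. Second, the ``doubling'' identity $q^{n}=1+(q-1)[n]_q$, which turns a squared weight into a shifted pair of $q$-zeta values, namely $\sum_{n\geq 1}q^{2n}/[n]_q^{a}=\zeta_q(a)+(q-1)\zeta_q(a-1)$; this is exactly the shape of the factor $\zeta_q(r)+(q-1)\zeta_q(r-1)$ appearing in the statement.

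Carrying this out, after telescoping the inner sum collapses and one is left with a single double series of the form $\sum_{m_1,m_2}q^{2m_1+m_2}[m_1]_q^{-(\cdot)}\big([m_2]_q^{-s}-[m_1+m_2]_q^{-s}\big)$. I would split this into two pieces. The first piece factors as a product of one-dimensional sums: the $m_2$-sum is exactly $\zeta_q(s)$, while the $m_1$-sum carries a squared weight $q^{2m_1}$ and is handled by the doubling lemma, producing precisely the factor $\zeta_q(s)\big(\zeta_q(r)+(q-1)\zeta_q(r-1)\big)$ of the statement. In the second piece I would substitute $k=m_1+m_2$ (so that $k>m_1\geq 1$ and $q^{2m_1+m_2}=q^{m_1+k}$), which recognizes it precisely as a double zeta value $\zeta_q(s,\,\cdot\,)$. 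Rearranging the resulting relation isolates $\zeta_q(s,r)$ and yields the claimed formula.

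The step I expect to be the main obstacle is the bookkeeping in the telescoping and reindexing: the substitution $k=m_1+m_2$ shifts the second slot of the emergent double zeta by one unit, so the exponents must be tracked with care to land on $\zeta_q(s,r)$ rather than on a neighboring value. This shift is also what forces the hypothesis $r\geq 3$, since it keeps the leading argument of every Mordell--Tornheim sum at least $2$, guaranteeing that each $\zeta_{MT,q}$ converges absolutely and that all the rearrangements above are valid. Beyond this, the only remaining point is the (routine, for $q>1$) absolute convergence that justifies the interchanges of summation.
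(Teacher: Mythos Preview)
Your approach is essentially the paper's own. The paper isolates a partial-fraction lemma,
\[
\frac{1}{(1-u)^r(1-uv)^s}=\frac{1}{(1-u)^r(1-v)^s}-\sum_{i=0}^{s-1}\frac{v}{(1-u)^{r-1}(1-v)^{i+1}(1-uv)^{s-i}},
\]
multiplies by $u^2v$, substitutes $u=q^{m}$, $v=q^{n}$, sums over $m,n\ge 1$, and normalises by $(1-q)^{s+r}$; this is exactly your finite geometric (telescoping) identity rewritten in $(1-q^{\bullet})$ notation, together with your key relation $[m_1+m_2]_q-[m_2]_q=q^{m_2}[m_1]_q$, and the paper uses the same ``doubling'' step $\sum_n q^{2n}/[n]_q^{a}=\zeta_q(a)+(q-1)\zeta_q(a-1)$ that you single out.

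Your flagged ``main obstacle'' is genuine, but it is a misprint in the statement rather than a flaw in the method. Running your computation (or the paper's) one actually lands on
\[
\zeta_q(s,r)=\zeta_q(s)\bigl(\zeta_q(r)+(q-1)\zeta_q(r-1)\bigr)-\sum_{j=0}^{s-1}\zeta_{MT,q}(r-1,\,j+1;\,s-j),
\]
since the telescoping contributes an extra factor $[m_1]_q^{-1}$ (equivalently, the lemma has $(1-u)^{r-1}$ in the summed terms), so the first Mordell--Tornheim argument comes out as $r-1$, not $r$. With that correction the hypothesis $r\ge 3$ keeps the first slot $\ge 2$, exactly as you argue.
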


\section{\bf Notations and Preliminaries}
In this section, we introduce some notations and definitions that are relevant to $q$-series. We also discuss some important findings that are crucial for the subsequent sections. The theory of $q$-series aims to find $q$-analogues of mathematical objects. The goal is to generalize or extend the properties of the original mathematical object to a more general setting. In this article, we assume that $q > 1$.\\
The $q$-analogue of a complex number $a$ is expressed as follows
\begin{align*}
[a]_q = \frac{q^a -1}{q-1}, ~~q \neq 1.
\end{align*} 
The $q$- factorial is defined as
\begin{align*}
[n]_q!&= [1]_q\cdot[2]_q \cdots [n-1]_q\cdot [n]_q = \tfrac{q - 1}{q - 1}\cdot\tfrac{q^2 - 1}{q - 1} \cdots \tfrac{q^n - 1}{q - 1}\\
&\quad = 1 \cdot (1+q) \cdots (1+q+q^2+ \cdots +q^{(n-1)})
\end{align*}
and the $q$- shifted factorial of $a$ is given by
\begin{align*}
(a;q)_0&=1, ~~~~~ (a;q)_n = \prod_{m=0}^{n-1} (1-aq^m), ~~~~~ n\geq1,\\
(a;q)_{\infty}& =\lim_{n\rightarrow\infty}(a;q)_n = \displaystyle\prod_{n=0}^{\infty} (1-aq^n).
\end{align*}
Furthermore, the subsequent partial fraction expression plays a crucial role in proving Theorems \ref{P2} and \ref{P3}
\begin{align}
\frac{1}{(1-u)(1-uv)^s}=\frac{1}{(1-u)(1-v)^s} - \sum_{i=0}^{s-1}\frac{v}{(1-v)^{i+1}(1-uv)^{s-i}}, \label{E6}
\end{align}
where $u$, $v \in \mathbb{R}$.
\section{\bf Proofs of the main theorems.}

\begin{proof}[Proof of Theorem \ref{P1}]
From the expression (\ref{E1}), we have
\begin{align*}
\zeta_q(s_1,s_2)&=(q-1)^{s_1+s_2}\Bigg[\frac{1}{q^{s_1-1}-1}\Bigg\{\frac{1}{q^{s_1+s_2-2}-1}+\frac{s_2}{q^{s_1+s_2-1}-1}+\frac{s_2(s_2+1)}{2(q^{s_1+s_2}-1)}+\cdots\Bigg\}\\
&\qquad +s_1\frac{1}{q^{s_1}-1}\Bigg\{\frac{1}{q^{s_1+s_2-1}-1}+\frac{s_2}{q^{s_1+s_2}-1}+\frac{s_2(s_2+1)}{2(q^{s_1+s_2+1}-1)}+\cdots\Bigg\}\\
&\qquad \qquad +\frac{s_1(s_1+1)}{2}\frac{1}{q^{s_1+1}-1}\Bigg\{\frac{1}{q^{s_1+s_2}-1}+\frac{s_2}{q^{s_1+s_2+1}-1} +\frac{s_2(s_2+1)}{2(q^{s_1+s_2+2}-1)}\\
&\qquad \qquad \qquad +\cdots\Bigg\} + \cdots \Bigg].
\end{align*}
Clearly, for $n_1 =0$ and $n_2 =0$ we have
\begin{align*}
\zeta_q(0,0)&= \lim_{s_1 \rightarrow 0, s_2 \rightarrow 0} \zeta_q(s_1,s_2)\\
& \quad = \frac{1}{(q^{-1}-1)(q^{-2}-1)} + \frac{1}{(q^{-1}-1) \log q} + \frac{1}{2(q-1) \log q}\\
\zeta_q^R(0,0) &= \lim_{s_2 \rightarrow 0, s_1 \rightarrow 0} \zeta_q^R(s_1,s_2)\\
& \quad = \frac{1}{(q^{-1}-1)(q^{-2}-1)} + \frac{3}{2(q^{-1}-1) \log q} + \frac{1}{ \log^2 q}.
\end{align*}
Hence, using the following asymptotic formulas
\begin{align*}
\frac{1}{x+2} &= \frac{1}{2} -\frac{x}{4} + \frac{x^2}{8} + O[x^3] ~~~(x \rightarrow 0)\\
\frac{1}{\log(1+x)}& = \frac{1}{x} +\frac{1}{2} - \frac{x}{12} + \frac{x^2}{24} + O[x^3] ~~~(x \rightarrow 0)\\
\frac{1}{\log^2(1+x)}& = \frac{1}{x^2} +\frac{1}{x} + \frac{1}{12}+ 0\cdot  x + O[x^2] ~~~(x \rightarrow 0),
\end{align*}
we easily determine that 
\begin{align*}
\lim_{q \rightarrow 1}\zeta_q(0,0) = \frac{5}{12} = \zeta^R(0,0) ~~~\text{and}~~~ \lim_{q \rightarrow 1}\zeta_q^R(0,0) = \frac{1}{3} = \zeta(0,0).
\end{align*}
\end{proof}

\begin{proof}[Proof of Theorem \ref{P2}]
We employ the partial fraction method given in the expression (\ref{E6}).
For $s=3$, multiply the identity (\ref{E6}) by $uv$, set $u=q^m$ and $v=q^n$ and sum over all positive integers $m$ and $n$. As a consequence, we obtain an equality with the double sum on the left-hand side as
\begin{align*}
\sum_{m=1}^{\infty}\sum_{n=1}^{\infty}\frac{q^{n+m}}{(1-q^m)(1-q^{n+m})^3}= \sum_{n=1}^{\infty}\sum_{m=1}^{\infty}\frac{q^{n+m}}{(1-q^n)(1-q^{n+m})^3}
\end{align*}
and the double sum on the right-hand side as
\begin{align}
&\sum_{n=1}^{\infty}\sum_{m=1}^{\infty}\Bigg(\frac{q^{n+m}}{(1-q^n)^3(1-q^m)}- \frac{q^{2n+m}}{(1-q^n)(1-q^{n+m})^3} - \frac{q^{2n+m}}{(1-q^n)^2(1-q^{n+m})^2} \nonumber\\
&\qquad - \frac{q^{2n+m}}{(1-q^n)^3(1-q^{n+m})} \Bigg) \nonumber\\
&\quad =\sum_{n=1}^{\infty} \frac{q^n}{(1-q^n)^3}\sum_{m=1}^{\infty}\Bigg(\frac{q^m}{(1-q^m)} - \frac{q^{m+n}}{(1-q^{m+n})}\Bigg) - \sum_{n=1}^{\infty}\sum_{m=1}^{\infty} \frac{q^{2n+m}}{(1-q^n)^2(1-q^{n+m})^2} \nonumber \\
&\quad \qquad -\sum_{n=1}^{\infty}\sum_{m=1}^{\infty} \frac{q^{2n+m}}{(1-q^n)(1-q^{n+m})^3}. \label{E9}
\end{align}
Taking the last double sum of expression (\ref{E9}) to the left-hand side and multiplying both the sides by $(1-q)^4$, we have
\begin{align*}
&(1-q)^4\sum_{n=1}^{\infty}\sum_{m=1}^{\infty}\frac{q^{n+m}+ q^{2n+m} }{(1-q^n)(1-q^{n+m})^3}\\
&\quad =(1-q)^4\Bigg(\sum_{n=1}^{\infty}\frac{q^n}{(1-q^n)^3}\sum_{m=1}^{n}\frac{q^m}{(1-q^m)} \Bigg) - \zeta_q(2,2)\\
&\quad = (1-q)^4\Bigg(\sum_{n=1}^{\infty}\frac{q^{2n}}{(1-q^n)^4} + \sum_{n=1}^{\infty}\sum_{m=1}^{n-1}\frac{q^{n+m}}{(1-q^n)^3(1-q^m)} \Bigg) - \zeta_q(2,2).
\end{align*}
Using (\ref{E12}), we have 
\begin{align*}
\sum_{n=1}^{\infty}\frac{q^{2n}}{[n]_q^4}& = \sum_{n=1}^{\infty}\frac{q^{n}}{[n]_q^4} + (q-1)\sum_{n=1}^{\infty}\frac{q^{n}}{[n]_q^3}\\ 
&\qquad = \zeta_q(4) +(q-1)\zeta_q(3)\\
&\qquad \qquad =\zeta_q(2) \zeta_q(2) - \zeta_q(2,2) - \zeta_q(2,2).
\end{align*}
Thus,
\begin{align*}
\sum_{n=1}^{\infty}\sum_{m=1}^{\infty}\frac{q^{n+m}+ q^{2n+m} }{[n]_q[n+m]_q^3}& = \zeta_q(4) - \zeta_q(2,2) + (q-1)\zeta_q(3) + \sum_{n > m \geq 1}\frac{q^{n+m}}{[n]_q^3[m]_q},\\
&\qquad = \zeta_q(2)\zeta_q(2) - 3\zeta_q(2,2) + \sum_{n > m \geq 1}\frac{q^{n+m}}{[n]_q^3[m]_q}.
\end{align*}
Changing the variables $m+n = t$ on the left-hand side, we get
\begin{align*}
\sum_{n=1}^{\infty}\sum_{m=1}^{\infty}\frac{q^{n+m}+ q^{2n+m} }{[n]_q[n+m]_q^3}&= \sum_{n=1}^{\infty}\sum_{t=n+1}^{\infty}\frac{q^{t}+ q^{n+t} }{[n]_q[t]_q^3}\\
&= \sum_{t>n \geq 1} \frac{q^{t}+ q^{n+t} }{[n]_q[t]_q^3}. 
\end{align*} 
Finally, setting $n=n_1$, $m=n_2$ on the right-hand side and $t=n_1$ and $n=n_2$ on the left-hand side, we get
\begin{align*}
\sum_{n_1> n_2 \geq 1} \frac{q^{n_1} }{[n_1]_q^3[n_2]_q} &= \zeta_q(4) - \zeta_q(2,2) + (q-1)\zeta_q(3)\\
& \qquad = \zeta_q(2)\zeta_q(2) - 3\zeta_q(2,2).
\end{align*}
Thus,
\begin{align*}
\zeta^{\circ}_q(3,1) = \zeta_q(4) - \zeta_q(2,2) + (q-1)\zeta_q(3) =  (\zeta_q(2))^2 - 3\zeta_q(2,2).
\end{align*}
This completes the proof of the first identity. Working on similar lines, we get second and third identities. Hence, the fourth also.
\end{proof}
 
\begin{proof}[Proof of Theorem \ref{P3}] For $s \geq 3$, multiply both the sides of expression (\ref{E6}) by $uv$ and then set $u=q^m$ and $v=q^n$. By summing over all positive integers $m$ and $n$, the double sum on the left-hand side is given as
\begin{align*}
\sum_{m=1}^{\infty}\sum_{n=1}^{\infty}\frac{q^{n+m}}{(1-q^m)(1-q^{n+m})^s}= \sum_{n=1}^{\infty}\sum_{m=1}^{\infty}\frac{q^{n+m}}{(1-q^n)(1-q^{n+m})^s}
\end{align*}
and the double sum on the right-hand side is given as 
\begin{align}
&\sum_{n=1}^{\infty}\sum_{m=1}^{\infty}\Bigg(\frac{q^{n+m}}{(1-q^n)^s(1-q^m)}- \frac{q^{2n+m}}{(1-q^n)(1-q^{n+m})^s} - \frac{q^{2n+m}}{(1-q^n)^2(1-q^{n+m})^{s-1}} \nonumber \\
&\qquad- \frac{q^{2n+m}}{(1-q^n)^3(1-q^{n+m})^{s-2}}- \cdots - \frac{q^{2n+m}}{(1-q^n)^s(1-q^{n+m})} \Bigg) \nonumber \\
&\quad =\sum_{n=1}^{\infty} \frac{q^n}{(1-q^n)^s}\sum_{m=1}^{\infty}\Bigg(\frac{q^m}{(1-q^m)} - \frac{q^{m+n}}{(1-q^{m+n})}\Bigg) - \sum_{n=1}^{\infty}\sum_{m=1}^{\infty} \frac{q^{2n+m}}{(1-q^n)^2(1-q^{n+m})^{s-1}} \nonumber \\
&\quad \qquad -\sum_{n=1}^{\infty}\sum_{m=1}^{\infty}\frac{q^{2n+m}}{(1-q^n)^3(1-q^{n+m})^{s-2}} - \cdots -\sum_{n=1}^{\infty}\sum_{m=1}^{\infty}\frac{q^{2n+m}}{(1-q^n)^{s-1}(1-q^{n+m})^{2}} \nonumber\\
&\quad \qquad \qquad -\sum_{n=1}^{\infty}\sum_{m=1}^{\infty} \frac{q^{2n+m}}{(1-q^n)(1-q^{n+m})^s}. \label{E10}
\end{align}
Taking the last double sum of expression (\ref{E10}) to the left-hand side and multiplying both the sides by $(1-q)^{s+1}$, we get
\begin{align*}
&(1-q)^{s+1}\Bigg(\sum_{n=1}^{\infty}\sum_{m=1}^{\infty}\frac{q^{n+m}+ q^{2n+m} }{(1-q^n)(1-q^{n+m})^s}\Bigg)\\
&\quad =(1-q)^{s+1}\Bigg(\sum_{n=1}^{\infty}\frac{q^n}{(1-q^n)^s}\sum_{m=1}^{n}\frac{q^m}{(1-q^m)}\Bigg) - \zeta_q(s-1,2) - \cdots - \zeta_q(2,s-1)\\
&\quad = (1-q)^{s+1}\Bigg(\sum_{n=1}^{\infty}\frac{q^{2n}}{(1-q^n)^{s+1}} + \sum_{n=1}^{\infty}\sum_{m=1}^{n-1}\frac{q^{n+m}}{(1-q^n)^s(1-q^m)}\Bigg) - \zeta_q(s-1,2) \\
&\quad \qquad - \zeta_q(s-2,3)- \cdots - \zeta_q(2,s-1).
\end{align*}
From (\ref{E12}), we obtain
\begin{align*}
\sum_{n=1}^{\infty}\frac{q^{2n}}{[n]_q^{s+1}}& = \sum_{n=1}^{\infty}\frac{q^{n}}{[n]_q^{s+1}} + (q-1) \sum_{n=1}^{\infty}\frac{q^{n}}{[n]_q^{s}}\\
&\qquad = \zeta_q(s+1) + (q-1)\zeta_q(s)\\
& \qquad\qquad = \zeta_q(r)\zeta_q(r^{\prime}) - \zeta_q(r, r^{\prime}) -\zeta_q(r^{\prime},r),
\end{align*}
where $r \geq 2$, $r^{\prime} \geq 2$ and $r + r^{\prime} = s+1$.
So, working on similar lines as in Theorem \ref{P2}, we get
\begin{align*}
\zeta_q^{\circ}(s,1) = \zeta_q(s+1) - \sum_{i=2}^{s-1}\zeta_q(s+1-i,i) + (q-1)\zeta_q(s).
\end{align*}
Now, depending on whether $s$ is odd or even, we have the following two cases.\\
\textbf{Case 1:} If $s$ is odd, this implies $s+1$ is even, so for $r = r^{\prime} = \frac{s+1}{2}$, $$\sum_{n=1}^{\infty}\frac{q^{2n}}{[n]_q^{s+1}} = \Big(\zeta_q\Big(\frac{s+1}{2}\Big)\Big)^2 - 2 \zeta_q\Big(\frac{s+1}{2}, \frac{s+1}{2}\Big).$$
So, doing similar kind of calculations as in Theorem \ref{P2}, we get
\begin{align*}
\zeta_q^{\circ}(s,1) = \Big(\zeta_q\Big(\frac{s+1}{2}\Big)\Big)^2 - 3 \zeta_q \Big(\frac{s+1}{2},\frac{s+1}{2} \Big) - \sum_{\substack{i=2\\i \neq \frac{s+1}{2}}}^{s-1}\zeta_q(s+1-i,i).
\end{align*}
When $r \neq r^{\prime}$, the number of possible pairs $(r, r^\prime)$ that satisfy $r, r^\prime \geq 2$ and $r + r^\prime = s+1$ (where the order of addends does not matter) is given by $\frac{(s-3)}{2}$. So, for each such pair, we get
\begin{align*}
\zeta_q^{\circ}(s,1) = \zeta_q(r) \zeta_q(r^{\prime}) -2 \zeta_q (r, r^{\prime})- 2 \zeta_q (r^{\prime}, r) - \sum_{\substack{i=2\\i \neq r, r^{\prime}}}^{s-1}\zeta_q(s+1-i,i).
\end{align*}
\textbf{Case 2:} If $s$ is even, this implies $s+1$ is odd, then the number of possible pairs $(t, t^\prime)$ that satisfy $t, t^\prime \geq 2$ and $t + t^\prime = s+1$ are $\frac{(s-2)}{2}$.\\
Hence, the result follows.
\end{proof}
\begin{proof}[Proof of Proposition \ref{P5}] First, note that from (\ref{QMZ}) and (\ref{E19}), we have
\begin{align*}
\zeta_q^*(s^{\prime},s) = \zeta_q(s^{\prime},s) + \zeta_q(s^{\prime}+s) + (q-1)\zeta_q(s^{\prime}+s-1)
\end{align*}
and from (\ref{E23}) and (\ref{E24}), we have
\begin{align*}
\zeta_q^{\circ *}(s^{\prime},s) = \zeta_q^{\circ}(s^{\prime},s) + \zeta_q(s^{\prime}+s).
\end{align*}
Then, using the above two observations and Theorem \ref{P3}, we get the desired results.
\end{proof}
\noindent
Before giving a proof of Theorem \ref{P4}, we give a lemma which plays a pivotal role in the proof and it is a generalization of the partial fraction given by the expression (\ref{E6}). Here is the statement of the lemma.
\begin{lemma} \label{L2}
Let $s,r \geq 1$ be two integers, then 
\begin{align} \label{E17}
\frac{1}{(1-u)^r(1-uv)^s}=\frac{1}{(1-u)^r(1-v)^s} - \sum_{i=0}^{s-1}\frac{v(1-u)^{1-r}}{(1-v)^{i+1}(1-uv)^{s-i}},
\end{align}
where $u,v \in \mathbb{R}$.
\end{lemma}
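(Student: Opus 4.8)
The plan is to recognize that the asserted identity (\ref{E17}) is, up to a single common factor, identical to the partial fraction (\ref{E6}) already recorded in the preliminaries. First I would observe that putting $r=1$ in (\ref{E17}) forces $(1-u)^{1-r}=(1-u)^{0}=1$, so the statement reduces \emph{verbatim} to (\ref{E6}). Thus (\ref{E6}) is precisely the base case $r=1$, and the only remaining task is to pass from this case to an arbitrary integer $r\geq 1$.

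The passage is a one-line homogenization: I would multiply both sides of (\ref{E6}) by $(1-u)^{1-r}$. On the left the two powers of $(1-u)$ combine,
\begin{align*}
(1-u)^{1-r}\cdot\frac{1}{(1-u)(1-uv)^{s}}=\frac{(1-u)^{-r}}{(1-uv)^{s}}=\frac{1}{(1-u)^{r}(1-uv)^{s}},
\end{align*}
which is exactly the left-hand side of (\ref{E17}). On the right the first term transforms in the same way into $1/\big((1-u)^{r}(1-v)^{s}\big)$, while each summand of the sum merely acquires the extra factor $(1-u)^{1-r}$, giving
\begin{align*}
(1-u)^{1-r}\cdot\frac{v}{(1-v)^{i+1}(1-uv)^{s-i}}=\frac{v(1-u)^{1-r}}{(1-v)^{i+1}(1-uv)^{s-i}}.
\end{align*}
Collecting the three pieces reproduces (\ref{E17}) term by term.

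The conditions to keep in mind are simply $u\neq 1$, $v\neq 1$, and $uv\neq 1$, under which every denominator is nonzero and the computation is a genuine identity of rational functions for all integers $s,r\geq 1$ (for $r\geq 2$ the exponent $1-r<0$, so $(1-u)$ sits in the denominators on the right, exactly as written). In fact there is no real analytic obstacle here: the entire content resides in (\ref{E6}), and the generalization is pure exponent bookkeeping. Should a self-contained derivation be preferred, one can instead prove (\ref{E6}) directly by writing $\tfrac{1}{(1-uv)^{s}}-\tfrac{1}{(1-v)^{s}}$ as a telescoping sum and peeling off one factor at a time via the elementary splitting $\tfrac{1}{1-v}-\tfrac{1}{1-uv}=\tfrac{v(1-u)}{(1-v)(1-uv)}$; since (\ref{E6}) is already available, however, the multiplication above is the most economical route.
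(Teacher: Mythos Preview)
Your argument is correct: multiplying both sides of (\ref{E6}) by $(1-u)^{1-r}$ immediately yields (\ref{E17}), since the factor $(1-u)^{1-r}$ combines with the existing $(1-u)^{-1}$ on the left and in the first right-hand term to produce $(1-u)^{-r}$, and simply attaches itself to each summand in the sum. The caveat you note (that $u\neq 1$, $v\neq 1$, $uv\neq 1$) is exactly what is needed for the identity to be a bona fide equality of rational functions.

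The paper takes a slightly different route: rather than reducing to (\ref{E6}), it proves (\ref{E17}) directly by observing that the sum on the right-hand side is a finite geometric progression with common ratio $(1-uv)/(1-v)$, and summing it in closed form cancels against the first term to give the left-hand side. That argument is self-contained and in fact establishes (\ref{E6}) and (\ref{E17}) in one stroke, whereas your approach takes (\ref{E6}) as given and derives the general $r$ by a single multiplication. Both are one-line proofs; yours is the more economical if (\ref{E6}) is already available, while the paper's makes the underlying geometric-series mechanism explicit and does not rely on a prior identity.
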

\begin{proof}[Proof of Lemma \ref{L2}]
To prove this identity, it is enough to note that the second term on the right-hand side can be expressed as a geometric series with a common ratio of $\Big(\frac{1-uv}{1-v}\Big)$.
\end{proof}
\noindent
Note that for $r=1$, it gives back the identity given in the expression (\ref{E6}).
\begin{proof}[Proof of Theorem \ref{P4}]
Multiply the identity (\ref{E17}) by $u^2v$, set $u=q^m$ and $v=q^n$ and sum over all positive integers $m$ and $n$. As a consequence, we obtain an equality with the double sum on the left-hand side as
\begin{align*}
\sum_{m=1}^{\infty}\sum_{n=1}^{\infty}\frac{q^m q^{n+m}}{(1-q^m)^r(1-q^{n+m})^s}= \sum_{n=1}^{\infty}\sum_{m=1}^{\infty}\frac{q^n q^{n+m}}{(1-q^n)^r(1-q^{n+m})^s} 
\end{align*}
and the double sum on the right-hand side as
\begin{align*}
&\sum_{n=1}^{\infty}\sum_{m=1}^{\infty}\Bigg(\frac{q^m q^{n+m}}{(1-q^n)^s(1-q^m)^r}- \frac{q^n q^m q^{n+m}}{(1-q^m)^{r-1}(1-q^n)(1-q^{n+m})^s} \\
&\qquad-\cdots - \frac{q^n q^m q^{n+m}}{(1-q^m)^{r-1}(1-q^n)^s(1-q^{n+m})}\Bigg)\\
& \quad =\sum_{n=1}^{\infty} \frac{q^n}{(1-q^n)^s} \sum_{m=1}^{\infty}\frac{q^{2m}}{(1-q^m)^r} - \sum_{n=1}^{\infty}\sum_{m=1}^{\infty}\Bigg(\frac{q^n q^m q^{n+m}}{(1-q^m)^{r-1}(1-q^n)(1-q^{n+m})^s}\\
&\quad \qquad +\cdots + \frac{q^n q^m q^{n+m}}{(1-q^m)^{r-1}(1-q^n)^s(1-q^{n+m})}\Bigg)
\end{align*}
Multiplying both the sides by $(1-q)^{s+r}$ and using (\ref{E12}), we get the desired result.
\end{proof}

%%}
\label{'ubl'}  
\end{document}